%

\documentclass[aop,seceqn,MSNbibl,citesort,dvips]{arximspdf}
\usepackage{mathbh}
\usepackage{graphicx}

%

\doi{10.1214/11-AOP689}
\volume{40}
\issue{6}
\pubyear{2012}
\firstpage{2667}
\lastpage{2689}

\makeatletter

\newtheorem{theorem}{Theorem}
\newtheorem{lemma}{Lemma}[section]
\newtheorem{proposition}[lemma]{Proposition}

\newproclaim{remark}[lemma]{Remark}

\newcommand{\arcsinh}{\mathop{\operatorname{arcsinh}}}
\newcommand{\R}{\mathbb{R}}
\newcommand{\Z}{\mathbb{Z}}

\makeatother

\begin{document}
\begin{frontmatter}

\title{Smirnov's fermionic observable away from~criticality\thanksref{T1}}
\runtitle{Smirnov's observable away from criticality}

\thankstext{T1}{Supported by the ANR Grant BLAN06-3-134462, the
EU Marie-Curie RTN CODY, the ERC AG CONFRA, as well as by the Swiss FNS.}

\begin{aug}
\author[A]{\fnms{V.} \snm{Beffara}\corref{}\ead[label=e1]{vbeffara@ens-lyon.fr}}
\and
\author[B]{\fnms{H.} \snm{Duminil-Copin}\ead[label=e2]{hugo.duminil@unige.ch}}
\runauthor{V. Beffara and H. Duminil-Copin}
\affiliation{\'Ecole Normale Sup\'erieure de Lyon and
Universit\'e de Gen\`eve}
\address[A]{Unit\'e de Math\'ematiques Pures et Appliqu\'ees\\
\'Ecole Normale Sup\'erieure de Lyon\\
F-69364 Lyon CEDEX 7\\
France\\
\printead{e1}} 
\address[B]{D\'epartement de Math\'ematiques\\
Universit\'e de Gen\`eve\\
Gen\`eve\\
Switzerland\\
\printead{e2}}
\end{aug}

\received{\smonth{12} \syear{2010}}
\revised{\smonth{7} \syear{2011}}

%
\begin{abstract}
In a recent and celebrated article, Smirnov [\textit{Ann. of Math.} (2)
\textbf{172} (2010) 1435--1467] defines an \textit{observable} for the
self-dual random-cluster model with cluster weight $q=2$ on the square
lattice $\Z^2$, and uses it to obtain conformal invariance in the
scaling limit. We study this observable away from the self-dual point.
From this, we obtain a new derivation of the fact that the self-dual
and critical points coincide, which implies that the critical inverse
temperature of the Ising model equals $\frac12\log(1+\sqrt2)$.
Moreover, we relate the correlation length of the model to the large
deviation behavior of a certain massive random walk (thus confirming an
observation by Messikh [The surface tension near criticality of the
2d-Ising model (2006) Preprint]), which allows us to compute it
explicitly.
\end{abstract}

%
\begin{keyword}[class=AMS]
\kwd[Primary ]{60K35}
\kwd{82B20}
\kwd[; secondary ]{82B26}
\kwd{82B43}.
\end{keyword}
\begin{keyword}
\kwd{Ising model}
\kwd{correlation length}
\kwd{critical temperature}
\kwd{massive harmonic function}.
\end{keyword}

\end{frontmatter}

\section*{Introduction}

The Ising model was introduced by Lenz \cite{Lenz} as a model for
ferromagnetism. His student, Ising, proved in his Ph.D. thesis \cite{Ising}
that the model does not exhibit any phase transition in one dimension.
On the square lattice $\mathbb{L} = (\mathbb{Z}^2, \mathbb{E})$, the
Ising model is the first model where phase transition and non-mean-field
behavior have been established (this was done by Peierls
\cite{Peierls}).

An Ising configuration is a random assignment of spins $\{-1,1\}$ on
$\Z^2$ such that the probability of a configuration $\sigma$ is
proportional to $\exp[\beta\sum_{a\sim b}\sigma(a)\sigma(b)]$, where
$\beta$ is the inverse temperature of the model and $a\sim b$ means that
$(a,b)$ is an edge of the lattice, that is, $(a,b) \in\mathbb{E}$.
Kramers and Wannier \cite{KramersWannier} identified (without proof) the critical temperature
where a phase transition occurs, separating an ordered from a disordered
phase, using planar duality. In 1944, Kaufman and Onsager
\cite{KaufmanOnsager} computed the free energy of the model, paving the
way to an analytic derivation of its critical temperature. In 1987,
Aizenman, Barsky and Fern\'andez \cite{AizenmanBarskyFernandez} found a
computation of the critical temperature based on differential
inequalities. Both strategies are quite involved, and the first goal of
this paper is to propose a new method, relying only on what we will call
Smirnov's observable:
%
%
\begin{theorem}
\label{maintheorem}
The critical inverse temperature of the Ising model on the square
lattice $\Z^2$ is equal to
\[
\beta_c=\tfrac{1}{2}\ln\bigl( 1+\sqrt{2}\bigr).
\]
\end{theorem}

Beyond the determination of the critical inverse temperature, physicists
and mathematicians were interested in estimates for the correlation
between two spins, $\mathbb{E}_{\beta}[\sigma(a)\sigma(b)]$ (where
$\mathbb{E}_{\beta}$ denotes the Ising measure). McCoy and Wu
\cite{McCoyWu} derived a closed formula for the two-point function, and
an asymptotic analysis shows that it decays exponentially quickly when
$\beta<\beta_c$. In addition to this, it was noticed by Messikh
\cite{Messikh} that the rate of decay is connected to large deviations
estimates for the simple random walk. In this article, we present a
direct derivation of this link, which provides a quick proof of the
following theorem:
%
%
\begin{theorem}
\label{maintheorem2}
Let $\beta<\beta_c$ and let $\mathbb{E}_{\beta}$ denote the (unique)
infinite-volume Ising measure at inverse temperature $\beta$; fix
$a=(a_1,a_2)\in\mathbb{L}$. Then
\[
\lim_{n\rightarrow\infty} -\frac{1}{n} \ln(
\mathbb{E}_{\beta} [\sigma(0)\sigma(n a)] ) = a_1 \arcsinh
sa_1 + a_2 \arcsinh sa_2,
\]
where $s$ solves the equation
\[
\sqrt{1+(sa_1)^2}+\sqrt{1+(sa_2)^2}=\sinh2\beta+\sinh^{-1}
2\beta.
\]
\end{theorem}

Instead of working with the Ising model, we rather deal with its
\textit{random-cluster representation} (known as the
\textit{random-cluster model with cluster weight $q=2$}). It is well
known \cite{FortuinKasteleyn} that one can couple this model with the
Ising model (see, e.g., \cite{Grimmett} for a comprehensive study of
random-cluster models) in such a way that the spin correlations of the
Ising model get rephrased as cluster connectivity properties of their
random-cluster representations, which allows for the use of geometric
techniques. For instance, the determination of $\beta_c$ is equivalent
to the determination of the critical point $p_c$ for the random-cluster
model.

The understanding of the two-dimensional random-cluster model with
$q=2$ has recently progressed greatly \cite{Smirnov,Duminil}, thanks to
the use of the so-called \textit{fermionic observable} introduced by
Smirnov \cite{Smirnov}, which was instrumental in the proof of
conformal invariance. This observable is defined on the edges of a
finite domain with Dobrushin boundary conditions (mixed free and wired;
see Section~\ref{secbasic} for a formal definition), and it is discrete
holomorphic at the self-dual point
$p_{\mathrm{sd}}=\sqrt2/(1+\sqrt2)$.\vadjust{\goodbreak}

The idea of our argument is the following. Below the self-dual point,
the observable can still be defined, but discrete holomorphicity fails,
and the observable decays exponentially quickly in the distance to the
wired boundary. Along the free boundary, the modulus of the observable
can be written exactly as a connection probability, so in the $p<p_{\mathrm{sd}}$
regime the two-point function is exponentially small as well, and that
implies that the system is then in the subcritical regime, thus
providing the lower bound $p_c \geq p_{\mathrm{sd}}$ on the critical parameter.
Theorem~\ref{maintheorem} then follows from duality.

In fact, the rate of exponential decay (and therefore
Theorem~\ref{maintheorem2}) can be derived by comparing the observable
to the Green function of a massive random walk
(Proposition~\ref{messikh}); the key ingredient is the observation that
the observable is massive harmonic in the bulk for $p<p_{\mathrm{sd}}$. The
correspondence between the two-point function of the Ising model and
that of the massive random walk was previously noticed by Messikh
\cite{Messikh}.

In Section~\ref{secbasic}, we remind the reader of a few classic
features of the random-cluster model. In Section~\ref{secdefinition},
we define Smirnov's observable away from criticality and gather some of
its important properties---for instance, the fact that the observable
on a graph is related to connection properties for sites on the
boundary. In Section~\ref{secproofthm1}, we derive
Theorem~\ref{maintheorem} by showing that the observable decays
exponentially fast. Section~\ref{secproofthm2} is devoted to a
refinement of estimates on the observable, which leads to the proof of
Theorem~\ref{maintheorem2}.

\section{Basic features of the model}
\label{secbasic}

The Ising model on the square lattice admits a classical representation
through the so-called \textit{random-cluster model with $q=2$}. This model
can be studied using geometric arguments which are classic in the theory
of lattice models. We list here a few basic features of random-cluster
models; a more exhaustive treatment (together with the proofs of all our
statements) can be found in Grimmett's monograph \cite{Grimmett}.
Readers familiar with the subject can skip directly to the next section.

\subsection*{Definition of the random-cluster model}

The random-cluster measure can be defined on any graph. However, we will
restrict ourselves to the square lattice, denoted by $\mathbb{L} =
(\mathbb{Z}^2,\mathbb{E})$ with $\mathbb{Z}^2$ denoting the set of
\textit{sites} and $\mathbb{E}$ the set of \textit{bonds}. In this paper,
$G$ will always denote a connected subgraph of $\mathbb{L}$, that is,
a subset of vertices of $\mathbb{Z}^2$ together with all the bonds
between them. We denote by $\partial G$ the (inner) boundary of $G$,
that is, the set of sites of $G$ linked by a bond to a site of
$\mathbb{Z}^2\setminus G$.

A \textit{configuration} $\omega$ on $G$ is a random subgraph of $G$,
having the same sites and a subset of its bonds. We will call the bonds
belonging to $\omega$ \textit{open}, the others \textit{closed}. Two sites
$a$ and $b$ are said to be \textit{connected} (denoted by
$a\leftrightarrow b$), if there is an \textit{open path}---a path
composed of open bonds only---connecting them. The (maximal) connected
components will be called \textit{clusters}. More generally, we extend
this definition and notation to sets in a straightforward way.\vadjust{\goodbreak}

A \textit{boundary condition} $\xi$ is a partition of $\partial G$. We
denote by $\omega\cup\xi$ the graph obtained from the configuration
$\omega$ by identifying (or \textit{wiring}) the vertices in $\xi$ that
belong to the same class of $\xi$. A boundary condition encodes the way
in which sites are connected outside of $G$. Alternatively, one can see
it as a collection of \textit{abstract bonds} connecting the vertices in
each of the classes to each other. We still denote by $\omega\cup\xi$
the graph obtained by adding the new bonds in $\xi$ to the configuration
$\omega$, since this will not lead to confusion. Let $o(\omega)$
[resp., $c(\omega)$] denote the number of open (resp., closed) bonds of
$\omega$
and $k(\omega,\xi)$ the number of connected components of
$\omega\cup\xi$. The probability\vspace*{1pt} measure $\phi^{\xi}_{p,q,G}$ of the
random-cluster model on a \textit{finite} subgraph $G$ with parameters
$p\in[0,1]$ and $q\in(0,\infty)$ and boundary condition $\xi$ is defined
by
%
%
\begin{equation}
\label{probconf}
\phi_{p,q,G}^{\xi} (\{\omega\}) := \frac
{p^{o(\omega)}(1-p)^{c(\omega)}q^{k(\omega,\xi)}} {Z_{p,q,G}^{\xi}}
\end{equation}
for any subgraph $\omega$ of $G$, where $Z_{p,q,G}^{\xi}$ is a
normalizing constant known as the \textit{partition function}. When there
is no possible confusion, we will drop the reference to parameters in
the notation.

\subsection*{The domain Markov property}

One can encode, using an appropriate boundary condition $\xi$, the
influence of the configuration outside a sub-graph on the measure within
it. Consider a graph $G=(V,E)$ and a random-cluster measure
$\phi^{\psi}_{p,q,G}$ on it. For $F\subset E$, consider $G'$ with $F$
as the set of edges and the endpoints of it as the set of sites.
Then,\vspace*{-1pt}
the restriction to $G'$ of $\phi^{\psi}_{p,q,G}$ conditioned to
match some configuration $\omega$ outside $G'$ is exactly
$\phi_{p,q,G'}^{\xi}$, where $\xi$ describes\vspace*{1pt} the connections
inherited from $\omega\cup\psi$ (two sites are wired if they are
connected by a path in $\omega\cup\psi$ outside $G'$; see (4.13) in
\cite{Grimmett}). This property is the direct analog of the DLR
conditions for spin systems.

\subsection*{Comparison of boundary conditions when $q\geq1$}

An event is called \textit{increasing} if it is preserved by addition of
open edges. When $q\geq1$, the model is \textit{positively correlated}
(see (4.14) in \cite{Grimmett}), which has the following consequence:
for any boundary conditions $\psi\leq\xi$ (meaning that $\psi$ is finer
than $\xi$, or in other words, that there are fewer connections in
$\psi$ than in $\xi$), we have
%
%
\begin{equation}
\label{comparisonbetweenboundaryconditions}
\phi^{\psi}_{p,q,G}(A)\leq\phi^{\xi}_{p,q,G}(A)
\end{equation}
for any increasing event $A$. This last property, combined with the
Domain Markov property, provides a powerful tool in order to study how
events decorrelate.

\subsection*{Examples of boundary conditions: free, wired and Dobrushin}

Two boundary conditions play a special role in the study of
random-cluster models: the \textit{wired} boundary condition, denoted by
$\phi_{p,q,G}^1$, is specified by the fact that all the vertices\vadjust{\goodbreak} on the
boundary are pairwise connected; the \textit{free} boundary condition,
denoted by $\phi_{p,q,G}^0$, is specified by the absence of
wirings between boundary sites. These boundary conditions are extremal
for stochastic ordering, since any other boundary condition is smaller
(resp.,
greater) than the wired (resp., free) boundary condition.

Another example of boundary condition will be very useful in this paper.
The following definition is deliberately not as general as would be
possible, in order to limit the introduction of notation. Let $G$ be a
finite subgraph of $\mathbb L$; assume that its boundary is a
self-avoiding polygon in $\mathbb L$, and let $a$ and $b$ be two sites
of $\partial G$. The triple $(G,a,b)$ is called a \textit{Dobrushin
domain}. Orienting its boundary counterclockwise defines two oriented
boundary arcs $ab$ and $ba$; the \textit{Dobrushin boundary condition} is
defined to be free on $ab$ (there are no wirings between boundary sites)
and wired on $ba$ (all the boundary sites are pairwise connected). We
will refer to those arcs as the \textit{free arc} and the \textit{wired
arc}, respectively. The measure associated to this boundary condition
will be denoted by $\phi_{p,q,G}^{a,b}$ or simply $\phi_{G}^{a,b}$.

\subsection*{Planar duality for Dobrushin domains}

One can associate to any random-cluster measure with parameters $p$ and
$q$ on a Dobrushin domain $(G,a,b)$ a dual measure. First, define the
\textit{dual graph} $G^*$ as follows: place a site in the center of every
face of $G$ and every face of $\mathbb{L}$ adjacent to the free arc; see
Figure~\ref{figmediallattice}. Bonds of the dual graph correspond to
bonds of the primal graph and link nearest neighbors. Construct a bond
%
%
\begin{figure}

\includegraphics{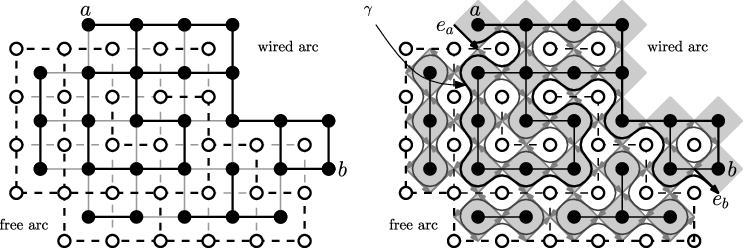}

\caption{Left: a graph $G$ with its dual $G^*$. The black
(resp., white) sites are the sites of $G$ (resp.,~$G^*$). The open
bonds of $G$ (resp., $G^*$) are represented by solid (resp., dashed)
black bonds. Right: construction of the medial lattice and
the loop representation: the loops are interfaces between primal and
dual clusters.}
\label{figmediallattice}
\end{figure}
model on $G^*$ by declaring any bond of the dual graph to be open
(resp., closed) if the corresponding bond of the primal lattice is
closed (resp., open) for the initial random-cluster model. The new model
on the dual graph is then a random-cluster measure with parameters $p^*
= p^*(p,q)$ and $q^*=q$ satisfying
\[
p^*(p,q) :=
\frac{(1-p)q}{(1-p)q+p}\quad \mbox{or equivalently}\quad
\frac{p^*p}{(1-p^*)(1-p)}=q
\]
with wired boundary condition on the dual
arc adjacent to $ab$, and free boundary condition on the dual arc
adjacent to $ba$. In particular, it is again a random-cluster model with
Dobrushin boundary condition. This relation is known as \textit{planar
duality}. It is then natural to define the self-dual point
$p_{\mathrm{sd}}=p_{\mathrm{sd}}(q)$ by solving the equation $p^*(p_{\mathrm{sd}},q)=p_{\mathrm{sd}}$, which
gives
\[
p_{\mathrm{sd}}(q) := \frac{\sqrt{q}} {1+\sqrt{q}}.
\]

This notion of duality has a natural counterpart, with the same formal
definition, for free boundary conditions: the dual model is then a
random-cluster model with parameters $p^*$ and $q$, with wired boundary
condition.

\subsection*{Infinite-volume measures and the critical point}

The domain Markov property and comparison between boundary conditions
allow us to define infinite-volume measures. Indeed, one can consider a
sequence of measures on boxes of increasing sizes with free boundary
conditions. This sequence is increasing in the sense of stochastic
domination, which implies that it converges weakly to a limiting
measure, called the random-cluster measure on $\mathbb{L}$ with free
boundary conditions (denoted by $\phi_{p,q}^0$). This classic
construction can be performed with many other sequences of measures,
defining a priori different infinite-volume measures on
$\mathbb{L}$. For instance, one can define the random-cluster measure
$\phi_{p,q}^1$ with wired boundary conditions, by considering the
decreasing sequence of random-cluster measures on finite boxes with
wired boundary condition.

For our purpose, the following example of infinite-volume measure will
be important: we define a measure on the strip $\mathcal{S}_{\ell} =
\mathbb{Z}\times[ 0,\ell]$. The sequence of measures $(\phi
_{[-m,m]\times[0,{\ell}]} ^{(m,0),(-m,0)}) _{m\geq0}$ is
\textit{increasing}, in the sense that for any cylindrical increasing
event $A$ defined in the strip, the sequence $(\phi
_{[-m,m]\times[0,{\ell}]} ^{(m,0),(-m,0)} (A))$ is well\vspace*{1pt}
defined for $m$ large enough and is nondecreasing. This implies that
the sequence of measures converges weakly as $m$ goes to infinity. The
limit is called the random-cluster measure on the infinite strip with
free boundary conditions on the top and wired boundary condition on the
bottom, and we will denote it by $\phi_{\mathcal{S}_{\ell}}
^{\infty,-\infty}$.

When defining such measures in infinite volume by thermodynamical
limits, it is natural to ask whether the limit depends on the choice of
domains and boundary conditions used to build it; in the case of the
random-cluster model, a more specific version of the question is whether
taking free or wired boundary conditions affects the limit---these two
being extremal, if the limits match, this implies uniqueness of the
infinite-volume limit for all boundary conditions. It can be shown that
for fixed $q\geq1$, uniqueness can fail only on a countable set
$\mathcal{D}_q$ of values of $p$; see Theorem (4.60) of \cite{Grimmett}.
From that (or rather from the weaker statement that the set of values of
$p$ at which uniqueness holds is everywhere dense\vadjust{\goodbreak} in $[0,1]$), and from
the fact that measures for larger values of $p$ dominate those for
smaller values, it is not difficult to show that there exists a
\textit{critical point} $p_c$ such that for any infinite-volume measure
with $p<p_c$ (resp., $p>p_c$), there is almost surely no infinite
component of connected sites (resp., at least one infinite
component). Moreover, it is also known that the infinite-volume measure
is unique when $p<p_{\mathrm{sd}}$.
%
%
\begin{remark}
Physically, it is natural to conjecture that the critical point
satisfies $p_c=p_{\mathrm{sd}}$. Indeed, if one assumes $p_c\neq p_{\mathrm{sd}}$, there
should be a phase transition due to the change of behavior in the
primal model at $p_c$ and a second (different) phase transition due to
the change of behavior in the dual model at $p_c^*$. This is unlikely
to happen---in fact, constructing a natural-looking model exhibiting
two phase transitions is not so easy; but the equality of $p_c$ and
$p_{\mathrm{sd}}$ is only known to hold in a few specific cases.

In the case of the random-cluster model on the square lattice, the
authors proved recently \cite{BeffaraDuminil} that indeed
$p_c(q)=p_{\mathrm{sd}}(q)$ for all $q\geq1$ (therefore determining the
critical temperature for all $q$-state Potts models on
$\mathbb{L}$). The argument does not use Smirnov's observable, but it
is quite a bit longer than the one we present here, is not as
self-contained (mostly because it depends on recent sharp-threshold
results by Graham and Grimmett \cite{GrahamGrimmett,GrahamGrimmett2})
and it provides less information on the subcritical phase.
\end{remark}

\subsection*{Coupling with the Ising model}

The random-cluster model on $G$ with parameter $q=2$ is of particular
interest since it can be coupled with the Ising model; consider a
configuration $\omega$ sampled with probability $\phi_{p,2,G}^0$ and
assign independently a spin $+1$ or $-1$ to every cluster with
probability $1/2$. We are now facing a model of spins on sites of $G$. It
can be proved that the law of the configuration corresponds to the Ising
model at temperature $\beta=\beta(p)=-\frac{1}{2}\ln(1-p)$ with free
boundary condition.

We are then equipped with a ``dictionary'' between the properties of the
random-cluster model with $q=2$ and those of the Ising model. One
instance of this relation is given by the useful identity
%
%
\begin{equation}
\label{couplingFKIsing}
\mathbb{E}_{\beta(p),G}^{\mathrm{free}} [\sigma(0)\sigma(a)] =
\phi_{p,2,G}^0 (0\leftrightarrow a),
\end{equation}
where the left-hand term denotes the correlation between sites $0$ and
$a$ for the Ising model at inverse temperature $\beta$ on the graph $G$
with free boundary condition.

The critical inverse temperature $\beta_c$ of the Ising model is
characterized by the fact that the two-point correlation undergoes a
phase transition in its asymptotic behavior: below $\beta_c$, the
correlation goes to 0 when $a$ goes to infinity, while above it, it
stays bounded away from 0. The previous definition readily implies that
$\beta_c = - \frac12 \log(1-p_c(2))$. In order to prove
Theorem~\ref{maintheorem}, it is thus sufficient to determine $p_c(2)$.
Notice that the inverse temperature corresponding to the self-dual point
is given by $\beta(p_{\mathrm{sd}})=\frac{1}{2}\ln(1+\sqrt{2})$ so that what
needs to be proved can be written as $p_c(2)=p_{\mathrm{sd}}(2)$.\vadjust{\goodbreak}

The same reasoning implies that we can compute correlation lengths for
the random-cluster model in order to prove Theorem~\ref{maintheorem2}.

\section{Definition of the observable}
\label{secdefinition}

\textit{From now on, we consider only random-cluster models on the
two-dimensional square lattice with parameter $q=2$} (we drop the
dependency on $q$ in the notation).

\subsection*{The medial lattice and the loop representation}

Let $(G,a,b)$ be a Dobrushin domain. In this paragraph, we aim for the
construction of the loop representation of the random-cluster model,
defined on the so-called medial graph. In order to do that, consider
$G$ together with its dual $G^*$; declare \textit{black} the sites of $G$
and \textit{white} the sites of $G^*$. Replace every site with a colored
diamond, as in Figure~\ref{figmediallattice}. The \textit{medial graph}
$G_{\diamond}=(V_{\diamond},E_{\diamond})$ is defined as follows (see
Figure~\ref{figmediallattice} again): $E_\diamond$ is the set of
diamond sides which belong to both a black and a white diamond;
$V_\diamond$ is the set of all the endpoints of the edges in
$E_\diamond$. We obtain a subgraph of a rotated (and rescaled) version
of the usual square lattice. We give $G_\diamond$ an additional
structure as an oriented graph by orienting its edges clockwise around
white faces.

The random-cluster measure with Dobrushin boundary condition has a
rather convenient representation in this setting. Consider a
configuration $\omega$: it defines clusters in $G$ and dual clusters in
$G^*$. Through every vertex of the medial graph passes either an open
bond of $G$ or a dual open bond of $G^*$. Hence, there is a unique way
to draw Eulerian (i.e., using every edge of $E_{\diamond}$ exactly
once) loops on the medial lattice such that the loops are the
\textit{interfaces} separating primal clusters from dual clusters. Namely,
a loop arriving at a vertex of the medial lattice always makes a $\pm
\pi/2$ turn so as not to cross the open or dual open bond through this
vertex; see Figure~\ref{figmediallattice} yet again.

Besides loops, the configuration will have a single curve joining the
vertices adjacent to $a$ and $b$, which are the only vertices in
$V_{\diamond}$ with three adjacent edges within the domain (the fourth
edge emanating from $a$, resp., $b$, will be denoted by~$e_a$, resp.,
$e_b$). This curve is called the \textit{exploration path}; we will denote
it by $\gamma$. It corresponds to the interface between the cluster
connected to the wired arc and the dual cluster connected to the free
arc.

This gives a bijection between random-cluster configurations on $G$ and
Eulerian loop configurations on $G_{\diamond}$. The probability measure
can be nicely rewritten (using Euler's formula) in terms of the loop
picture
\[
\phi_{G}^{a,b} ({\omega}) = \frac{x(p)^{\#\ \mathrm{open}
\ \mathrm{bonds}} \sqrt{2}{}^{\#\ \mathrm{loops}}} {\tilde{Z}(p,G)}\qquad
\mbox{where } x(p) := \frac p {(1-p) \sqrt{2}},
\]
and
$\tilde{Z}(p,G)$ is a normalizing constant. Notice that $p=p_{\mathrm{sd}}$ if
and only if $x(p)=1$. This bijection is called the \textit{loop
representation} of the random-cluster model. The orientation of the
medial graph gives a natural orientation to the interfaces in the loop
representation.

\subsection*{The edge observable for Dobrushin domains}

Fix a Dobrushin domain $(G,a,b)$. Following \cite{Smirnov}, we now
define an observable $F$ on the edges of its medial graph, that is,
a function $F\dvtx E_{\diamond} \to\mathbb{C}$. Roughly speaking, $F$
is a
modification of the probability that the exploration path passes through
an edge.\looseness=-1

%
%
\begin{figure}

\includegraphics{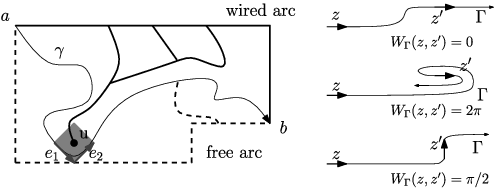}

\caption{Left: a schematic picture of the exploration path
and a boundary point $u$, together with two possible choices $e_1$
and $e_2$ for $e$. If $u$ is connected to the wired arc, the
exploration path must go through $e$. Right: the winding of
a curve. In the first example, the curve did one quarter-turn on the
left and one quarter-turn on the right.}
\label{figboundary}
\end{figure}

First, introduce the following definition: the \textit{winding}
$\mathrm{W}_{\Gamma}(z,z')$ of a curve $\Gamma$ between two edges $z$ and
$z'$ of the medial graph is the total rotation (in radians and oriented
counter-clockwise) that the curve makes from the mid-point of edge $z$
to that of edge $z'$; see Figure~\ref{figboundary}. We define the
observable $F$ for any edge $e\in E_{\diamond}$ as
%
%
\begin{equation}
\label{defF}
F(e) := \phi_{G}^{a,b} \bigl(\mathrm{e}^{({\mathrm i}/{2})
\mathrm{W}_{\gamma}(e,e_b)} \mathbh{1}_{e\in\gamma}\bigr),
\end{equation}
where $\gamma$ is the exploration path.
%
%
\begin{remark}
In \cite{Smirnov}, Smirnov extends the observable to vertices---as
being the sum of $F$ on adjacent edges---in order to study the
critical regime. Properly rescaled, this function converges to a
holomorphic function, which is a key step toward the proof of
conformal invariance; and indeed the exploration curve $\gamma$
converges to the trace of an SLE process as the mesh goes to $0$. Away
from criticality, it is more convenient to work directly with the
observable on edges.
\end{remark}

The following three lemmas present the properties of the observable we
will be using in the proofs of both theorems. They have direct
counterparts in Smirnov's article \cite{Smirnov} (in particular, the
idea of the proof of Lemma~\ref{boundary} can be found in the proof of
Lemma 4.12 of \cite{Smirnov}), and as such they are not completely
new. We still include their proofs here since our goal is to keep the
present paper as self-contained as possible.
%
%
\begin{lemma}
\label{boundary}
Let $u\in G$ be a site on the free arc, and $e$ be a side of the black
diamond associated to $u$ which borders a white diamond of the free
arc; see Figure~\ref{figboundary}. Then
%
%
\begin{equation}
|F(e)|=\phi_{G}^{a,b}(u\leftrightarrow\mbox{wired arc}).
\end{equation}
\end{lemma}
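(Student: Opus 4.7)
The plan is to write $|F(e)|$ as the claimed connection probability by splitting into two independent steps: (i) the set equality $\{e \in \gamma\} = \{u \leftrightarrow \text{wired arc}\}$, and (ii) a rigidity statement that, conditionally on $e \in \gamma$, the unimodular factor $e^{\frac{i}{2} W_\gamma(e, e_b)}$ is a deterministic constant. Together these collapse the complex expectation in the definition of $F(e)$ into a unimodular constant times a probability, and taking moduli yields the lemma.

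For step (i), I would appeal to the loop representation. The exploration path $\gamma$ is by construction the interface in $G_\diamond$ separating the primal cluster attached to the wired arc from the dual cluster attached to the free arc. The edge $e$ lies between the black diamond at $u$ and a white diamond external to $G$ belonging to the free arc; this external white diamond necessarily lies on the ``free dual cluster'' side of any interface that might meet it. Hence $\gamma$ traverses $e$ exactly when the black diamond at $u$ is on the opposite, wired-cluster side, which under the bijection between loop configurations and primal configurations $\omega$ amounts to the event that $u$ is connected to the wired arc in $\omega$.

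For step (ii), the key observation is that both $e$ and $e_b$ are boundary edges of $G_\diamond$. The medial lattice is oriented clockwise around white diamonds, and the white diamond adjacent to $e$ lies outside $G$, so $\gamma$ can cross $e$ in only one admissible direction; the tangent direction at $e_b$ is prescribed in the same way by the construction near $b$. The portion of $\gamma$ between $e$ and $e_b$ is therefore a simple planar arc with prescribed endpoints and prescribed tangents at both ends, and for such an arc the winding is topologically rigid: closing it up by a path along the boundary of $G$ produces a Jordan curve whose winding equals $\pm 2\pi$, which pins down $W_\gamma(e, e_b)$ as a fixed real value independent of the configuration realizing $\{e \in \gamma\}$. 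Pulling the resulting unimodular constant out of the expectation gives $|F(e)| = \phi_G^{a,b}(e \in \gamma)$, and combining with (i) completes the proof.

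The delicate point is step (ii): it is not enough to pin the winding modulo $2\pi$ (which would follow from the prescribed tangents alone), since $F$ involves the half-winding and a shift by $2\pi$ would flip the sign of $e^{\frac{i}{2} W_\gamma(e, e_b)}$. The rigidity must therefore be established as a genuine real-valued equality, which is why both the simplicity of $\gamma$ and the planarity of $G$ enter; alternatively one can argue combinatorially by tracking the $\pm \pi/2$ turns of $\gamma$ at medial-lattice vertices together with the bipartite parity structure of $G_\diamond$. Step (i), by contrast, is a routine structural consequence of the loop representation.
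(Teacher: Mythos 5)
Your two-step decomposition is exactly the one the paper uses: step (i) is the paper's identification of $\{u\leftrightarrow\text{wired arc}\}$ with $\{e\in\gamma\}$ via the interface description, and step (ii) is the paper's terse remark that ``the exploration path cannot wind around $e$'' so the winding is deterministic. You have simply spelled out the topological content of step (ii) more carefully (closing up $\gamma$ to a Jordan curve and invoking the rotation-index theorem), and you correctly flag the subtlety that rigidity is needed as a genuine equality and not merely modulo $2\pi$ --- a point the paper glosses over but which is indeed essential since a $2\pi$ shift would flip the sign of $e^{\frac{\mathrm i}{2}W}$.
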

\begin{pf}
Let $u$ be a site of the free arc and recall that the exploration path
is the interface between the open cluster connected to the wired arc
and the dual open cluster connected to the free arc. Since $u$ belongs
to the free arc, $u$ is connected to the wired arc if and only if $e$
is on the exploration path, so that
\[
\phi_{G}^{a,b}(u\leftrightarrow
\mbox{wired arc})=\phi_{G}^{a,b}(e\in\gamma).
\]
The edge $e$ being on
the boundary, the exploration path cannot wind around it, so that the
winding (denoted $\mathrm{W}_1$) of the curve is deterministic (and easy to
write in terms of that of the boundary itself). We deduce from this
remark that
\begin{eqnarray*}
|F(e)|&=&\bigl|\phi_{G}^{a,b} \bigl({\mathrm e}^{({\mathrm i}/{2}) \mathrm{W}_1}
\mathbh{1}_{e\in\gamma}\bigr)\bigr| = \bigl|{\mathrm e}^{({\mathrm i}/{2})
\mathrm{W}_1}\phi_{G}^{a,b}(e\in\gamma)\bigr| \\
&=& \phi_{G}^{a,b}(e\in
\gamma)=\phi_{G}^{a,b}(u\leftrightarrow\mbox{wired arc}).
\end{eqnarray*}
\upqed
\end{pf}

For a random-cluster model, one can use the parameters $p$ or $x$
interchangeably. We introduce a third parameter which will be
convenient: let $\alpha=\alpha(p)\in[0,2\pi)$ be given by the relation
%
%
\begin{equation}\label{eqalphafromp}
{\mathrm e}^{{\mathrm i}\alpha(p)}:= \frac{{\mathrm e}^{{\mathrm i}\pi/4}
+x(p)}{{\mathrm e}^{{\mathrm i}\pi/4}x(p)+ 1}.
\end{equation}
Observe that $\alpha(p)=0$ if and only if $p=p_{\mathrm{sd}}$ and $\alpha(p)>0$
for $p<p_{\mathrm{sd}}$. With this definition:
%
%
\begin{lemma}
\label{relationaroundavertex}
Consider a vertex $v\in V_{\diamond}$ with four adjacent edges in
$E_{\diamond}$. For every $p\in[0,1]$,
%
%
\begin{equation}
\label{relvertex}
F(A)+F(C) ={\mathrm e}^{{\mathrm i}\alpha(p)} [F(B)+F(D)],
\end{equation}
where $A$ and $C$ (resp., $B$ and $D$) are the adjacent edges pointing
toward (resp., away from) $v$, as depicted in
Figure~\ref{figconfiguration}.
\end{lemma}
\begin{pf}
Let $v$ be a vertex of $V_{\diamond}$ with four adjacent edges,
indexed as mention above. Edges $A$ and $C$ play symmetric roles, so
that we can further require the indexation to be in clockwise order
(see one such indexation in Figure~\ref{figconfiguration}). Recall
that any vertex in $V_{\diamond}$ corresponds to a bond of the primal
graph \textit{and} a bond of the dual graph. We consider the involution
$s$ on the space of configurations which switches the state (open or
closed) of the bond of the primal lattice corresponding to $v$.

%
%
\begin{figure}

\includegraphics{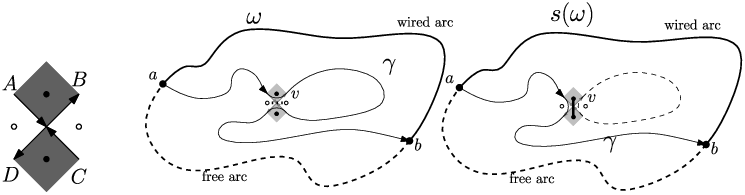}

\caption{Left: indexation of the edges adjacent to
$v$. Right: two associated configurations~$\omega$ and~$s(\omega)$. In
this picture, $v$ corresponds to a vertical bond of the primal
lattice.} \label{figconfiguration}
\end{figure}

Let $e$ be an edge of the medial graph and denote by $e_{\omega} :=
\phi_{G}^{a,b}(\omega) {\mathrm e}^{({\mathrm
i}/{2})\mathrm{W}_{\gamma}(e,e_b)}\times \mathbh{1}_{e\in\gamma}$ the
contribution of $\omega$ to $F(e)$. Since $s$ is an involution, the
following relation holds:
\[
F(e):=\sum_{\omega} e_{\omega}=\frac{1}{2} \sum_{\omega}
\bigl[ e_{\omega}+e_{s(\omega)} \bigr].
\]
In order to prove
(\ref{relvertex}), it suffices to prove the following for any
configuration~$\omega$:
%
%
\begin{equation}
\label{c}
A_{\omega} + A_{s(\omega)} + C_{\omega} + C_{s(\omega)} =
{\mathrm e}^{{\mathrm i}\alpha(p)} \bigl( B_{\omega} + B_{s(\omega)} +
D_{\omega} +
D_{s(\omega)} \bigr).
\end{equation}
When $\gamma(\omega)$ does not go through any of the edges adjacent to
$v$, it is easy to see that neither does $\gamma(s(\omega))$. All the
contributions then vanish and (\ref{c}) trivially holds. Thus we can
assume that $\gamma(\omega)$ passes through at least one edge adjacent
to $v$. The interface follows the orientation of the medial graph, and
thus can enter $v$ through either $A$ or $C$ and leave through $B$ or
$D$. Without loss of generality we assume that it enters first through
the edge $A$ and leaves last through the edge $D$; the other cases are
treated similarly.

Two cases can occur: either the exploration curve, after arriving
through~$A$, leaves through $B$ and then returns a second time through
$C$, leaving through~$D$; or the exploration curve arrives through~$A$
and leaves through~$D$, with $B$ and $C$ belonging to a loop. Since
the involution exchanges the two cases, we can assume that $\omega$
corresponds to the first case. Knowing the term $A_{\omega}$, it is
possible to compute the contributions of $\omega$ and $s(\omega)$ to
all of the edges adjacent to $v$. Indeed:
\begin{itemize}
\item The probability of $s(\omega)$ is equal to $x(p)\sqrt{2}$ times
the probability of $\omega$ (due to the fact that there is one
additional open edge and one additional loop).
\item Windings of the curve can be expressed using the winding at
$A$. For instance, the winding at $B$ in the configuration $\omega$
is equal to the winding at $A$ minus a $\pi/2$ turn.
\end{itemize}
The contributions are given as:
\begin{center}
\tablewidth=315pt
\begin{tabular*}{\tablewidth}{@{\extracolsep{\fill}}lcccc@{}}
\hline
\textbf{Configuration} & $\bolds{A}$ & $\bolds{C}$ & $\bolds{B}$ & $\bolds{D}$\\
\hline
$\omega$ & $A_{\omega}$ & ${\mathrm e}^{{\mathrm i}\pi/2}A_{\omega}$ &
${\mathrm e}^{-{\mathrm i}\pi/4}A_{\omega}$ & ${\mathrm e}^{{\mathrm
i}\pi/4}A_{\omega}$\\
[3pt]
$s(\omega)$ & $x(p)\sqrt{2}A_{\omega}$ & 0 & 0 & ${\mathrm e}^{{\mathrm
i}\pi/4}x(p)\sqrt{2}A_{\omega}$ \\
\hline
\end{tabular*}
\end{center}

\vspace*{6pt}
\noindent Using the identity ${\mathrm e}^{{\mathrm i}\pi/4}+{\mathrm
e}^{-{\mathrm i}\pi/4}=\sqrt{2}$, we deduce (\ref{c}) by summing the
contributions of all the edges around $v$.
\end{pf}

The previous lemma provides us with one linear relation between values
of $F$ for every vertex inside the domain. However, there are
approximately twice as many edges than vertices in $G_{\diamond}$ so
that these relations do not completely determine the value of $F$. The
next lemma is therefore crucial since it decreases the number of
possible values for $F$; roughly speaking, it states that the complex
argument (modulo $\pi$) of $F(e)$ is determined by the orientation of
the edge $e$.
%
%
\begin{lemma}
\label{argument}
$F(e)$ belongs to $\mathbb{R}$ (resp., ${\mathrm e}^{-{\mathrm i}\pi
/4}\mathbb{R}$, ${\mathrm i}\mathbb{R}$ or ${\mathrm e}^{{\mathrm i}\pi
/4}\mathbb{R}$) on edges $e$ pointing in the same direction
as the ending edge $e_b$ (resp., edges pointing in a direction which
forms an angle $\pi/2$, $\pi$ and $3\pi/2$ with $e_b$).
\end{lemma}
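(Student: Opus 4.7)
The proof plan is to show that, for any fixed directed edge $e$ of the medial graph, the only source of variation of the phase $e^{\mathrm{i}W_\gamma(e,e_b)/2}$ from one configuration to the next is an overall sign; then $F(e)$ is automatically a real multiple of a single explicit complex number, depending only on the orientation of $e$ relative to $e_b$.

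The key observation is that the exploration path $\gamma$ traverses every edge it uses in the direction prescribed by the orientation of $G_\diamond$. Hence, whenever $\gamma$ passes through $e$, its unit tangent at the mid-point of $e$ is deterministic (it is the unit vector pointing along $e$), and likewise the unit tangent at the mid-point of $e_b$ is deterministic. The winding $W_\gamma(e,e_b)$, being a continuous determination of the angle of the tangent along $\gamma$, must therefore equal $\theta(e_b)-\theta(e)+2\pi n(\omega)$ for some integer $n(\omega)\in\mathbb Z$ depending on the configuration, where $\theta(\cdot)$ denotes the direction of an oriented edge. Consequently
\[
\mathrm{e}^{\frac{\mathrm i}{2}W_\gamma(e,e_b)}=(-1)^{n(\omega)}\,\mathrm{e}^{\frac{\mathrm i}{2}(\theta(e_b)-\theta(e))},
\]
so each non-zero contribution to $F(e)$ in \eqref{defF} is a real (positive or negative) multiple of the fixed complex number $\mathrm{e}^{\mathrm{i}(\theta(e_b)-\theta(e))/2}$. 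Summing over $\omega$ gives $F(e)\in\mathrm{e}^{\mathrm{i}(\theta(e_b)-\theta(e))/2}\,\mathbb R$.

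It then only remains to read off the four cases from this formula. If $e$ is parallel to $e_b$ then $\theta(e_b)-\theta(e)\equiv 0\pmod{2\pi}$, giving $F(e)\in\mathbb R$; if $e$ makes an angle $\pi/2$ with $e_b$ (measured as in the statement, so $\theta(e)=\theta(e_b)+\pi/2$) then $F(e)\in\mathrm{e}^{-\mathrm{i}\pi/4}\mathbb R$; the two remaining angles $\pi$ and $3\pi/2$ yield $\mathrm{i}\mathbb R$ and $\mathrm{e}^{\mathrm{i}\pi/4}\mathbb R$, matching the statement (the $\pi$ ambiguity coming precisely from the sign $(-1)^{n(\omega)}$).

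The only step that requires any care is the first: one must be confident that, even though $\gamma$ may wind arbitrarily many times before reaching $e_b$, the difference of tangent angles along $\gamma$ is well-defined modulo $2\pi$ and equals the deterministic direction difference $\theta(e_b)-\theta(e)$. This is an immediate consequence of the fact that $\gamma$ is an oriented path in the oriented medial graph, so its tangents at the two endpoints are fixed and independent of $\omega$; thus the real obstacle is essentially a bookkeeping issue about the orientation convention on $G_\diamond$ rather than a genuine difficulty.
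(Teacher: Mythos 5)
Your proposal is correct and takes essentially the same approach as the paper: both rely on the observation that, since $\gamma$ follows the fixed orientation of the medial graph, the winding $W_\gamma(e,e_b)$ is determined modulo $2\pi$ by the directions of $e$ and $e_b$, so that the phase factor $\mathrm{e}^{\mathrm{i}W_\gamma(e,e_b)/2}$ is determined up to a sign $(-1)^{n(\omega)}$ and the sum defining $F(e)$ is a real multiple of $\mathrm{e}^{\mathrm{i}(\theta(e_b)-\theta(e))/2}$. Your write-up simply makes explicit the bookkeeping that the paper states more tersely.
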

\begin{pf}
The winding at an (oriented) edge can only take its value in the set
$\mathrm{W}_0+2\pi\mathbb{Z}$ where $\mathrm{W}_0$ is the winding at $e$ of an arbitrary
possible interface passing through $e$. Therefore, the winding weight
involved in the definition of $F$ is always proportional to ${\mathrm
e}^{{\mathrm i}\mathrm{W}_0/2}$ with a real-valued coefficient, and thus the
complex argument of $F$ is equal to $\mathrm{W}_0/2$ or $\mathrm{W}_0/2+\pi$. Since
$\mathrm{W}_0$ is exactly the angle between the direction of $e$ and that of
$e_b$, we obtain the result.
\end{pf}

\subsection*{The observable in strips}

The definition of $F$ can be extended to the case of the strip. Indeed,
the loop representation extends in this setting; the
$\phi_{\mathcal{S}_{\ell}}^{\infty,-\infty}$-probability of having an
infinite cluster is $0$: for fixed $\ell$, the model is essentially
one dimensional, and it is a simple exercise to prove that it must be
subcritical. Hence, there is a \textit{unique} interface going from
$+\infty$ to $-\infty$, which we call $\gamma$. We define
\[
F(e) :=
\phi_{\mathcal{S}_{\ell}}^{\infty,-\infty} \bigl[ {\mathrm e}^{
({\mathrm i}/{2})\mathrm{W}_{\gamma}(e,-\infty)} \mathbh{1}_{e\in\gamma}
\bigr],
\]
where $\mathrm{W}_{\gamma}(e,-\infty)$ is the winding of the curve
between $e$ and $-\infty$. This winding is well defined up to an
additive constant, and we set it to be equal to $0$ for edges of the
bottom side which point inside the domain. It is easy to see that $F$
is the limit of observables in finite boxes, so that the properties of
fermionic observables in Dobrushin domains carry over to the
infinite-volume case. In particular, the conclusions of the previous
three lemmas apply to it as well.

\section{\texorpdfstring{Proof of Theorem \protect\ref{maintheorem}}{Proof of Theorem 1}}
\label{secproofthm1}

The proof consists of three steps:
\begin{itemize}
\item We first prove using Lemmas~\ref{relationaroundavertex}
and~\ref{argument} that the observable decays exponentially fast when
$p<p_{\mathrm{sd}}$ in a well chosen Dobrushin domain (namely a strip with free
boundary condition on the top and wired boundary condition on the
bottom). Lemma~\ref{boundary} then implies that the probability that a
point on the top of the strip is connected to the bottom decays
exponentially fast in the height of the strip.
\item We derive exponential decay of the connectivity function for the
infinite-volume measure with free boundary conditions from the first
part.
\item Finally, we show that exponential decay implies that the
random-cluster model is subcritical when $p<p_{\mathrm{sd}}$, and that its dual
is supercritical. This last step concludes the proof of
Theorem~\ref{maintheorem} and is classical.
\end{itemize}
In the proof, points are identified with their complex coordinates.

\subsection*{Step 1: Exponential decay in the strip}

Let $p<p_{\mathrm{sd}}$, and consider the random-cluster model on the strip
$\mathcal{S}_\ell$ of height $\ell>0$ with wired boundary condition on
the bottom and free boundary condition on the top. Define $e_{k}$ and
$e_{k+1}$ to be the north-west-pointing sides of the diamonds associated
to the points ${\mathrm i}k$ and ${\mathrm i}(k+1)$, respectively.
Label some of
the edges around these two diamonds as $x$, $x'$, $x''$, $y$ and
$y'$ as shown in Figure~\ref{figeventsurrounding}.

%
%
\begin{figure}[b]

\includegraphics{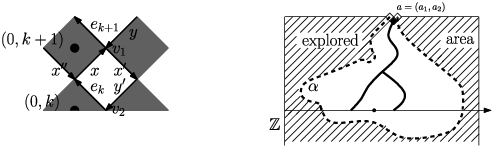}

\caption{Left: the labeling of edges around $e_k$ used in
Step 1. Right: a dual circuit surrounding an open path in
the box $[-a_2,a_2]^2$. Conditioning on to the most exterior such
circuit gives no information on the state of the edges inside it.}
\label{figeventsurrounding}
\end{figure}

Lemmas~\ref{relationaroundavertex} and~\ref{argument} have a very
important consequence: around a vertex~$v$, the value of the observable
on one edge can be expressed in terms of its values on \textit{only two
other edges}. This can be done by seeing the relation given by
Lemma~\ref{relationaroundavertex} as a linear relation between four
vectors in the plane $\R^2$, and applying an orthogonal projection to a
line orthogonal to one of them (which can be chosen using
Lemma~\ref{argument}). One then gets a linear relation between three
real numbers, but using Lemma~\ref{argument} ``in reverse'' shows that
this is enough to determine any of the corresponding three (complex)
values of the observable given the other two.

For instance, we can project (\ref{relvertex}) around $v_1$
orthogonally to $F(y)$, so that we obtain a relation between projections
of $F(x)$, $F(x')$ and $F(e_{k+1})$. Moreover, we know the complex
argument (modulo $\pi$) of $F$ for each edge so that the relation
between projections can be written as a relation between $F(x)$, $F(x')$
and $F(e_{k+1})$ themselves. This leads to
%
%
\begin{equation} \label{eq1}
{\mathrm e}^{-{\mathrm i}\pi/4}F(x) = \cos(\pi/4-\alpha)F(e_{k+1})-\cos
(\pi/4+\alpha){\mathrm e}^{-{\mathrm i}\pi/2}F(x').
\end{equation}
Applying the same reasoning around $v_2$, we obtain
%
%
\begin{equation} \label{eq2}
{\mathrm e}^{-{\mathrm i}\pi/4}F(x) = \cos(\pi/4+\alpha)F(e_k)-\cos
(\pi/4-\alpha){\mathrm e}^{-{\mathrm i}\pi/2}F(x'').
\end{equation}
The translation invariance of $\phi_{\mathcal{S}_\ell}
^{\infty,-\infty}$ implies
%
%
\begin{equation}\label{eq3}
F(x')=F(x'').
\end{equation}
Moreover, symmetry with respect to the imaginary axis implies that
%
%
\begin{equation}
\label{eq4}
F(x)={\mathrm e}^{{\mathrm i}\pi/4}\overline{F(x')}={\mathrm
e}^{-{\mathrm i}\pi/4}F(x').
\end{equation}
Indeed, if, for a configuration $\omega$, $x$ belongs to $\gamma$,
and the
winding is equal to~$W$, in the reflected configuration $\omega'$, $x'$
belongs to $\gamma(\omega')$ and the winding is equal to $\pi/2-W$.

Plugging (\ref{eq3}) and (\ref{eq4}) into (\ref{eq1}) and
(\ref{eq2}), we obtain
\begin{eqnarray*}
F(e_{k+1})&=&{\mathrm e}^{-{\mathrm i}\pi/4}\frac{1+\cos
(\pi/4+\alpha)}{\cos(\pi/4-\alpha)}F(x)\\
&=&\frac{[1+\cos
(\pi/4+\alpha)]\cos(\pi/4+\alpha)}{[1+\cos(\pi/4-\alpha)]\cos
(\pi/4-\alpha)}F(e_k).
\end{eqnarray*}
Remember that $\alpha(p)>0$ since $p<p_{\mathrm{sd}}$,
so that the multiplicative constant is less than 1. Using
Lemma~\ref{boundary} and the previous equality inductively, we find that
there exists $c_1=c_1(p)<1$ such that, for every $\ell> 0$,
\[
\phi_{\mathcal{S}_{\ell}} ^{\infty,-\infty}[{\mathrm i}\ell
\leftrightarrow
\mathbb{Z}] = |F(e_{\ell})| =c_1^\ell|F(e_{1})|\leq c_1^\ell,
\]
where the last inequality is due to the fact that the observable has
complex modulus less than $1$.

\subsection*{Step 2: Exponential decay for $\phi^0_p$ when $p<p_{\mathrm{sd}}$}

Fix again $p<p_{\mathrm{sd}}$. Let $N\in\mathbb{N}$, and recall that
$\phi^0_{p,N}:=\phi^0_{p,2,[-N,N]^2}$ converges to the infinite-volume
measure with free boundary conditions $\phi^0_p$ when $N$ goes to
infinity.

Consider a configuration in the box $[-N,N]^2$, and let $A_{\max}$
be the site of the cluster of the origin which maximizes the
$\ell^\infty$-norm $\max\{|x_1|,|x_2|\}$ (it could be equal to $N$). If
there is more than one such site, we consider the greatest one in
lexicographical order. Assume that $A_{\max}$ equals $a=a_1+{\mathrm
i}a_2$ with $a_2\geq|a_1|$ (the other cases can be treated the same
way by symmetry, using the rotational invariance of the lattice).

By definition, if $A_{\max}$ equals $a$, $a$ is connected to $0$ in
$[-a_2,a_2]^2$. In addition to this, because of our choice of the free\vadjust{\goodbreak}
boundary condition, there exists a dual circuit starting from $a +
\mathrm i/2$ in the dual of $[-a_2,a_2]^2$ (which is the same as
$\mathbb L^* \cap[-a_2-1/2, a_2+1/2]^2$) and surrounding both $a$ and
$0$. Let $\Gamma$ be the outermost such dual circuit: we get
%
%
\begin{equation}
\label{eqsumongamma}
\phi_{p,N}^0(A_{\max}=a) = \sum_{\gamma}
\phi_{p,N}^0(a\leftrightarrow
0|\Gamma=\gamma)\phi_{p,N}^0(\Gamma=\gamma),
\end{equation}
where the sum is over contours $\gamma$ in the dual of $[-a_2,a_2]^2$
that surround both $a$ and $0$.

The event $\{\Gamma=\gamma\}$ is measurable in terms of edges outside or
on $\gamma$. In addition, conditioning on this event implies that the
edges of $\gamma$ are dual-open. Therefore, from the domain Markov
property, the conditional distribution of the configuration inside
$\gamma$ is a random-cluster model with free boundary condition.
Comparison between boundary conditions implies that the probability of
$\{a\leftrightarrow0\}$ conditionally on $\{\Gamma=\gamma\}$ is smaller
than the probability of $\{a\leftrightarrow0\}$ in the strip
$\mathcal{S}_{a_2}$ with free boundary condition on the top and wired
boundary condition on the bottom. Hence, for any such $\gamma$, we get
\[
\phi_{p,N}^0(a\leftrightarrow0|\Gamma=\gamma) \leq
\phi_{\mathcal{S}_{a_2}}^{\infty,-\infty}(a\leftrightarrow0)=
\phi_{\mathcal{S}_{a_2}}^{\infty,-\infty}(a\leftrightarrow
\mathbb{Z})\leq c_1^{a_2}= c_1^{|a|/2}
\]
(observe that for the second
measure, $\mathbb{Z}$ is wired, so that $\{a\leftrightarrow0\}$ and
$\{a\leftrightarrow\mathbb{Z}\}$ have the same probability). Plugging
this into (\ref{eqsumongamma}), we obtain
\[
\phi_{p,N}^0(A_{\max}=a) \leq
\sum_{\gamma}c_1^{|a|/2} \phi_{p,N}^0(\Gamma=\gamma)\le
c_1^{|a|/2}.
\]

Fix $n\leq N$. Since $c_1<1$, we deduce from the previous inequality
that there exist two constants $0<c_2,C_2<\infty$ such that
\begin{eqnarray*}
\phi_{p,N}^0(0\leftrightarrow\mathbb{Z}^2\setminus[-n,n]^2)&\leq&
\sum_{a\in[-N,N]^2\setminus[-n,n]^2}\phi_{p,N}^0(A_{\max}=a)\\
&\leq&
\sum_{a\notin[-n,n]^2} c_1^{|a|/2}\leq C_2{\mathrm e}^{-c_2n}.
\end{eqnarray*}
Since the estimate is uniform in $N$, we deduce that
%
%
\begin{equation}
\label{expon}
\phi_{p}^0(0\leftrightarrow
\mathbb{Z}^2\setminus[-n,n]^2)\leq C_2{\mathrm e}^{-c_2n}.
\end{equation}

\subsection*{Step 3: Exploiting exponential decay}

The inequality $p_{c}\geq p_{\mathrm{sd}}$ follows from (\ref{expon}) since
exponential decay prevents the existence of an infinite cluster for
$\phi^0_p$ when $p<p_{\mathrm{sd}}$.

In order to prove that $p_c\leq p_{\mathrm{sd}}$, we use the following standard
reasoning. Let $A_n$ be the event that the point $(n,0)$ is in an open
circuit which surrounds the origin. Notice that this event is included
in the event that the point $(n,0)$ is in a cluster of radius larger
than $n$. For $p<p_{\mathrm{sd}}$, (\ref{expon}) implies that the probability of
$A_n$ decays exponentially fast. The Borel--Cantelli lemma shows that
there is almost surely only a finite number of values of $n$ such that
$A_n$ occurs. In other words, there is only a finite number of open
circuits surrounding the origin, which enforces the existence of an
infinite dual cluster. It means that the dual model is supercritical
whenever $p<p_{\mathrm{sd}}$. Equivalently, the primal model is supercritical
whenever $p>p_{\mathrm{sd}}$, which implies $p_c\leq p_{\mathrm{sd}}$.

\section{\texorpdfstring{Proof of Theorem \protect\ref{maintheorem2}}{Proof of Theorem 2}}
\label{secproofthm2}

In this section, we compute the correlation length in all directions. In
\cite{Messikh}, Messikh noticed that this correlation length was
connected to large deviations for random walks and asked whether there
exists a direct proof of the correspondence. Indeed, large deviations
results are easy to obtain for random walks, so that one could deduce
Theorem~\ref{maintheorem2} easily. In the following, we exhibit what we
believe to be the first direct proof of this result.

An equivalent way to deal with large deviations of the simple random
walk is to study the \textit{massive Green function} $G_m$, defined in the
bulk as
\[
G_m(x,y) := \mathbb E^x \biggl[ \sum_{n \geq0} m^n \mathbh
1_{X_n=y} \biggr],
\]
where $\mathbb E^x$ is the law of a simple random
walk starting at $x$.

The correlation length of the two-dimensional Ising model is the same as
the correlation length for its random-cluster representation so that we
will state the result in terms of the random-cluster. We use the
parameters $p$ and $\alpha=\alpha(p)$ without revealing the connection
with $\beta$ in the notation.
%
%
\begin{proposition}
For $p<p_{\mathrm{sd}}$ and any $a\in\mathbb{L}$,
%
%
\begin{equation}
\label{messikh}
-\lim_{n\rightarrow\infty}\frac1n\log\phi_{p}^0(0\leftrightarrow
na)= -\lim_{n\rightarrow\infty} \frac1n \log G_m(0,na),
\end{equation}
where $m = \cos[2\alpha(p)]$---the value of $\alpha(p)$ is given by
(\ref{eqalphafromp}).
\end{proposition}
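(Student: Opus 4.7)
The plan is to establish the claim announced in the introduction --- that $F$ is \emph{massive harmonic in the bulk} with mass $m=\cos 2\alpha$ --- and then to identify the exponential decay rate of $\phi_p^0(0\leftrightarrow na)$ with that of $G_m(0,na)$ through classical asymptotics of the Green function $G_m$ on $\mathbb{Z}^2$.

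First I would derive massive harmonicity. Fix an interior primal vertex $u$ and consider the four medial vertices at the corners of its black diamond. At each, Lemma~\ref{relation_around_a_vertex} gives a complex linear relation between the four adjacent medial edges; by Lemma~\ref{argument}, projecting this relation onto the real line prescribed by a chosen edge's orientation turns it into a real linear identity involving only the sides of the diamond at $u$ and of the diamonds at the four primal neighbours $u\pm 1$, $u\pm\mathrm{i}$. Setting $H(u):=|F(e_u)|$ for a canonical side $e_u$ of the diamond at $u$, the four such identities combine --- after bookkeeping of the $\pi/4$ phases --- into the single scalar recurrence
$$\sum_{v\sim u} H(v) = \frac{4}{\cos 2\alpha}\, H(u),$$
which is precisely the equation satisfied, away from the source, by the Green function $G_m(\cdot,0)$ of the simple random walk on $\mathbb{Z}^2$ killed geometrically with survival parameter $m=\cos 2\alpha$.

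Next I would identify $H$ with a bulk connection probability. Taking the Dobrushin domain to be the half-plane with wired boundary on $\mathbb{Z}\times\{0\}$ (obtained as the $\ell\to\infty$ limit of the strips $\mathcal{S}_\ell$), Lemma~\ref{boundary} gives $H(u)=\phi^{\infty,-\infty}(u\leftrightarrow\mathbb{Z})$ on the free arc, while Step 1 extends $H$ to a massive harmonic function on the full half-plane. The standard equivalence between the bulk correlation length of the random-cluster model and the correlation length of its half-plane connection probabilities (FKG plus comparison between boundary conditions) then yields
$$\lim_n -\tfrac{1}{n}\log H(na) = \lim_n -\tfrac{1}{n}\log \phi_p^0(0\leftrightarrow na).$$
Since $H$ and $G_m(\cdot,0)$ satisfy the same massive harmonic recurrence with decay at infinity, differing only through a source term near the origin, their exponential decay rates in every direction coincide by the Fourier/Laplace asymptotics of $G_m$ on $\mathbb{Z}^2$, which concludes the proof.

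The main obstacle is Step~1: carrying out the projections of Lemma~\ref{relation_around_a_vertex} prescribed by Lemma~\ref{argument}, keeping track of the $\pi/4$ phases and of the orientations of the diamond sides, and verifying that the four resulting scalar relations collapse into a single equation with precisely the coefficient $\cos 2\alpha$. The computation is mechanical but delicate, and the clean appearance of $\cos 2\alpha$ is exactly what pins down the mass of the effective random walk on the right-hand side of \eqref{messikh}.
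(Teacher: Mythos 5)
Your Step 1 --- deriving the bulk massive-harmonicity $\sum_{v\sim u}H(v)=\tfrac{4}{\cos 2\alpha}H(u)$ from Lemmas~\ref{relation_around_a_vertex} and~\ref{argument} by projection --- is correct and is essentially the content of Lemma~\ref{massive_laplacian} in the paper. The difficulties start afterwards, and I see two genuine gaps.

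First, the assertion that ``the standard equivalence between the bulk correlation length and the correlation length of half-plane connection probabilities (FKG plus comparison between boundary conditions)'' settles the matching of rates is not justified, and is in fact one of the central obstacles the paper must overcome. FKG gives you inequalities in one direction only: a connection probability in a sub-domain with partly free boundary is dominated by the bulk one, and a wired boundary gives a bound the other way, but these two bounds do not close into an equality of decay rates without further work. The paper deals with this precisely by conditioning on the extremal point $w$ of the cluster of $0$ and introducing the wedge domain $T(w)$ (the plane minus two opposite quadrants), chosen so that (i) the observable in $T(w)$ is controllable and (ii) by construction $\{0\leftrightarrow a\}$ forces $w\leftrightarrow L^-$ inside $T(w)$. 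Your half-plane $\mathcal{S}_\infty$ does neither: for a general direction $a=(a_1,a_2)$ the point $na$ is in the interior, not on the free arc, so Lemma~\ref{boundary} does not convert $|F(na)|$ into a connection probability there, and even for purely vertical $a$ you only obtain the exponential rate along one axis.

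Second, the claim that two massive harmonic functions decaying at infinity and ``differing only through a source term near the origin'' must have the same exponential decay rate in every direction is false on $\mathbb{Z}^2$ or on a half-plane: the massive Laplacian admits a continuum of decaying solutions with different rates (think of separated solutions $\lambda_k^{\,y}\mathrm{e}^{\mathrm{i}kx}$ on the half-plane), and the rate one observes is dictated by boundary/source data, not by the equation alone. This is exactly why the paper does not argue by uniqueness of massive harmonic functions. Instead it passes to the killed random walk representation $F(a)=\mathbb{E}^a[F(X_\tau)m_\tau]$ and then proves two genuinely different estimates: for the lower bound a reflection argument (Lemma~\ref{upper3}) shows that the signed sum over paths collapses to $-\mathbb{E}^a[m^\tau\mathbbm{1}_{T=\infty}]$, which is comparable to $G_{\cos 2\alpha}(0,a)$ up to a polynomial factor; for the upper bound it works in $T(w)$, where the observable satisfies a \emph{different} relation on $L^+(w)$ (Lemma~\ref{massive_laplacian2}, with a mass exceeding $1$ at $w$), and a stopping-time argument bounds $|F(w)|$ by $C|w|\,G_{\cos 2\alpha}(0,w)$. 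These boundary corrections and the reflection step are not cosmetic bookkeeping; they are where the matching of rates actually happens, and your proposal replaces them with an appeal to a uniqueness principle that does not hold.

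In short: the bulk massive-harmonicity is right, but you still need (a) a mechanism turning bulk connections into observable values in a controllable Dobrushin domain --- the paper's $T(w)$ --- and (b) a quantitative comparison of the resulting martingale-stopped observable with $G_m$, which requires the reflection lemma and the boundary-modified Laplacian rather than an abstract uniqueness argument.
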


In \cite{Messikh}, the statement involves Laplace transforms, but we can
translate it into the previous terms. Moreover, the mass is expressed in
terms of $\beta$, but it is elementary to compute it in terms of
$\alpha$. Theorem~\ref{maintheorem2} follows from this proposition by
first relating the two-point functions of the Ising and $q=2$
random-cluster models, as was mentioned earlier, and then deriving the
asymptotics of the massive Green function explicitly---the details can
be found, for instance, in the proof of Proposition 8 in \cite{Messikh}.

Before delving into the actual proof, here is a short outline of the
strategy we employ. We have already seen exponential decay in the strip,
which was an essentially one-dimensional computation; we want to refine
it into a two-dimensional version for correlations between two points
$0$ and $a$ in the bulk, and once again we use the observable to
estimate them. The basic step, namely obtaining local linear relations
between the values of the observable, is the same, although it is
complicated by the lack of translation invariance. The point is that the
observable is massive harmonic when $p\neq p_{\mathrm{sd}}$ (see
Lemma~\ref{massivelaplacian} below). Since $G_m(\cdot,\cdot)$ is
massive harmonic in both variables away from the diagonal $x=y$, it is
possible to compare both quantities.

The main problem is that we are interested in correlations in the
bulk. The observable can be defined directly in the bulk (see below), but
it provides only a lower bound on the correlations. In order to obtain
an upper bound, we have to introduce an ``artificial'' domain [that will
be $T(a)$ below], which needs two features: the observable in it can be
well estimated, and at the same time correlations inside it have
comparable probabilities to correlations in the bulk. For the second
one, it is equivalent to impose that the Wulff shape centered at $0$, and
having $a$ on its boundary is contained in the domain in the
neighborhood of $a$; from convexity, it is then natural to construct
$T(a)$ as the whole plane minus two wedges, one with vertex at $0$ and
the other with vertex at $a$.

The proof is rather technical since we need to deal with the behavior of
the observable on the boundary of the domains. This was also an issue in
Smirnov's proof. At criticality, the difficulty was overcome by working
with the discrete primitive $H$ of $F^2$. Unfortunately, there is no
nice equivalent of $H$ to work with away from criticality. The solution
is to use a representation of $F$ in terms of a massive random
walk. This representation extends to the boundary and allows to control
the behavior of $F$ everywhere.
\begin{pf*}{Proof of Theorem \protect\ref{maintheorem2}}
Let $p<p_{\mathrm{sd}}$. Without loss of generality, we can consider $a =
(a_1,a_2) \in\mathbb{L}$ satisfying $a_2\geq a_1\geq0$. In the
proof, we identify a site $u$ of $\mathbb{L}$ with the unique side
$e_u$ of the associated black diamond which points north-west. In
other words $F(u)$ and $\{u\in\gamma\}$ should be understood as
$F(e_u)$ and $\{e_u\in\gamma\}$---notice that this differs from the
notation used in~\cite{Smirnov}.\vspace*{8pt}

\textit{The lower bound.}
Consider the observable $F$ in the bulk defined as follows: for every
edge $e$ not equal to $e_0$,
%
%
\begin{equation}
\label{defFbulk}
F(e) := \phi_p^0 \bigl({\mathrm e}^{({\mathrm i}/{2})
\mathrm{W}_{\gamma}(e,e_0)} \mathbh{1}_{e\in\gamma}\bigr),
\end{equation}
where $\gamma$ is the unique loop passing through $e_0$. Note that
this definition is justified by the fact that $p$ is subcritical, and
that it immediately implies that
%
%
\begin{equation}
\label{upper1}
\phi_{p}^0(0\leftrightarrow a) \ge|F(a)|.
\end{equation}
We mention that $F$ is not well defined at $e_0$. Indeed, $e_0$ can be
thought of as the start of the loop $\gamma$ or its end. In other
words, $F$ is multi-valued at $e_0$, with value 1 or $-$1.

Lemma~\ref{relationaroundavertex} can be extended to this context
following a very similar proof, but taking into account that $F$ is
multi-valued\vadjust{\goodbreak} at $e_0$. More precisely, let $e_0=xy$. Around any vertex
$v \notin\{x,y\}$ the relation in
Lemma~\ref{relationaroundavertex} still holds; besides,
\[
\cases{
F(\mathit{SE})+1 = {\mathrm e}^{{\mathrm i}\alpha(p)} [F(\mathit{SW})+F(\mathit{NE})],
&\quad
if $v = y$,\vspace*{2pt}\cr
F(\mathit{SW})+F(\mathit{NE}) = {\mathrm e}^{{\mathrm i}\alpha(p)} [-1+F(\mathit{SE})],
&\quad
if $v = x$,}
\]
where the $\mathit{NE}$ (resp., $\mathit{SE}$, $\mathit{SW}$) is the edge at $v$
pointing to the north-east (resp., south-east, south-west). In other
words, the statement of Lemma~\ref{relationaroundavertex} still
formally holds if we choose the convention that $F(e_0)=1$ when
considering the relation around $x$, and $F(e_0)=-1$ when considering
the relation around $y$.

One can see that Lemma~\ref{argument} is still valid. In fact, the two
lemmas imply that $F$ is massive harmonic:
%
%
\begin{lemma}
\label{massivelaplacian}
Let $p<p_{\mathrm{sd}}$ and consider the observable $F$ in the bulk. For
any site $X$ not equal to 0, we have
\[
\Delta_\alpha F(X) := \frac{\cos2\alpha}{4}[F(W) + F(S) + F(E) +
F(N)]-F(X) = 0,
\]
where $W$, $S$, $E$ and $N$ are the four neighbors of $X$.
\end{lemma}
\begin{pf}
Consider a site $X$ inside the domain and recall that we identify
$X$ with the corresponding edge of the medial lattice pointing
north-west. Index the edges around $X$ in the same way as in case 1
of Figure~\ref{figlemma}. By considering the six equations
%
%
\begin{figure}

\includegraphics{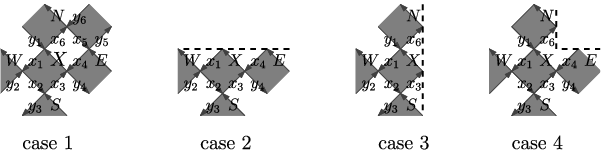}

\caption{Indexation of the edges around vertices in the different cases.}
\label{figlemma}
\end{figure}
corresponding to vertices that end one of the edges $x_1, \ldots,
x_6$ (being careful to identify the edges $A$, $B$, $C$ and $D$
correctly for each of the vertices), we obtain the following linear
system:
\[
\cases{
F(X) + F(y_1) = {\mathrm e}^{{\mathrm i}\alpha}[F(x_1)+F(x_6)],
\vspace*{1pt}\cr
F(y_2) + F(x_1) = {\mathrm e}^{{\mathrm i}\alpha}[F(x_2)+F(W)],
\vspace*{1pt}\cr
F(S)+F(x_2)={\mathrm e}^{{\mathrm i}\alpha}[F(y_3)+F(x_3)],\vspace*{1pt}\cr
F(x_3)+ F(x_4)={\mathrm e}^{{\mathrm i}\alpha}[F(y_4)+F(X)], \vspace*{1pt}\cr
F(E)+F(x_5)={\mathrm e}^{{\mathrm i}\alpha}[F(x_4)+F(y_5)], \vspace*{1pt}\cr
F(x_6)+F(y_6)={\mathrm e}^{{\mathrm i}\alpha}[F(x_5)+F(N)].}
\]

Recall\vspace*{1pt} that by definition, $F(X)$ is real. For an edge $e$, denote
by $f(e)$ the projection of $F(e)$ on the line directed by its
argument ($\mathbb{R}$, ${\mathrm e}^{{\mathrm i}\pi/4}\mathbb{R}$,
$i\mathbb{R}$ and ${\mathrm e}^{-{\mathrm i}\pi/4}\mathbb{R}$). By
projecting orthogonally to the $F(y_i)$, $i=1,\ldots,6$, the system
becomes
\[
\cases{
f(X)=\cos(\pi/4+\alpha)f(x_1)+\cos
(\pi/4-\alpha)f(x_6), & (1)\vspace*{1pt}\cr
f(x_1)=\cos(\pi/4+\alpha)f(x_2)+\cos(\pi/4-\alpha)
f(W),& (2)\vspace*{1pt}\cr
f(x_3)=\cos(\pi/4-\alpha)f(S)+-\cos
(\pi/4+\alpha)f(x_2),& (3)\vspace*{1pt}\cr
f(X)=\cos(\pi/4+\alpha)f(x_3)+\cos(\pi/4-\alpha)
f(x_4),& (4)\vspace*{1pt}\cr
f(x_4)=\cos(\pi/4+\alpha)f(E)+\cos
(\pi/4-\alpha)f(x_5),& (5)\vspace*{1pt}\cr
f(x_6)=-\cos(\pi/4-\alpha)f(x_5)+\cos
(\pi/4+\alpha)f(N).& (6)}
\]
By adding $(2)$ to $(3)$, $(5)$ to $(6)$ and $(1)$ to $(4)$, we
find
\[
\cases{
f(x_3)+f(x_1)=\cos(\pi/4-\alpha)[f(W)+f(S)],& (7)\vspace*{1pt}\cr
f(x_6)+f(x_4)=\cos(\pi/4+\alpha)[f(E)+f(N)],& (8)\vspace*{1pt}\cr
2f(X)=\cos(\pi/4+\alpha)[f(x_3)+f(x_1)]+\cos
(\pi/4-\alpha)[f(x_6)+f(x_4)].& (9)}
\]
Plugging $(7)$ and $(8)$ into $(9)$, we obtain
\[
2f(X)=\cos(\pi/4+\alpha)\cos(\pi/4-\alpha)[f(W)+f(S)+f(E)+f(N)].
\]
The edges $X, \ldots, N$ are pointing in the same direction so the
previous equality becomes an equality with $F$ in place of $f$ (use
Lemma~\ref{argument}). A simple trigonometric identity then leads to
the claim.
\end{pf}

Define the Markov process with generator $\Delta_\alpha$, which one
can see either as a branching process or as the random walk of a
massive particle. We choose the latter interpretation and write this
process $(X_n,m_n)$ where $X_n$ is a random walk with jump
probabilities defined in terms of $\Delta_\alpha$---the
proportionality between jump probabilities is the same as the
proportionality between coefficients---and $m_n$ is the mass
associated to this random walk. The law of the random walk starting
at $x$ is denoted $\mathbb{P}^x$. Note that the mass of the walk
decays by a factor $\cos2\alpha$ at each step.

Denote by $\tau$ the hitting time of $0$. The last lemma translates
into the following formula for any $a$ and any $t$:
%
%
\begin{equation}
\label{timet}
F(a)=\mathbb{E}^a[F(X_{t\wedge\tau})m_{t\wedge\tau}].
\end{equation}
The sequence $(F(X_t) m_t)_{t \leq\tau}$ is obviously uniformly
integrable, so that (\ref{timet}) can be improved to
%
%
\begin{equation}
\label{upper2}
F(a)=\mathbb{E}^a[F(X_\tau)m_\tau].
\end{equation}
Equations (\ref{upper1}), (\ref{upper2}) together with
Lemma~\ref{upper3} below give
\[
\phi_{p}^0(0\leftrightarrow a)\geq
\frac{c}{|a|}G_{\cos2\alpha}(0,a),
\]
which implies the lower bound.
%
%
\begin{lemma}
\label{upper3}
There exists $c>0$ such that, for every $a$ in the upper-right
quadrant,
\[
|\mathbb{E}^a[F(X_{\tau})m_{\tau}]|\geq
\frac{c}{|a|}G_{\cos2\alpha}(0,a).
\]
\end{lemma}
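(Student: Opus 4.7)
The plan is to exploit the martingale identity~(\ref{upper2}), $F(a) = \mathbb{E}^a[F(X_\tau) m_\tau]$, and derive a lower bound on its modulus by carefully tracking the multi-valued boundary data of $F$ at the source. Since $F$ is multi-valued at the edge $e_0$ with effective values $\pm 1$ according to the convention used at the medial vertex $x$ or $y$, the value $F(X_\tau)$ reduces to a sign $\eta \in \{+1,-1\}$ determined by how the walk arrives at $0$ on its last step. My first step would be to make this precise by redoing the projection computation from the proof of Lemma~\ref{massive_laplacian} with the central primal vertex equal to $0$, reading off $\eta(v)$ for each neighbour $v \sim 0$.

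With the signs in hand, I would apply the strong Markov property at $\tau$ to rewrite
$$\mathbb{E}^a[F(X_\tau) m_\tau] = \frac{\cos 2\alpha}{4} \sum_{v \sim 0} \eta(v)\, \tilde G_m(a,v),$$
where $\tilde G_m(a,v)$ is the Green function of the massive walk killed at its first hit of $0$. The classical last-exit decomposition $\tilde G_m(a,v) = G_m(a,v) - G_m(a,0)\,G_m(0,v)/G_m(0,0)$ then expresses $F(a)$ as a fixed finite linear combination of values of $G_m(a,\cdot)$ at sites near the origin. Structurally, this exhibits $F$ as a massive discrete dipole centred at $0$ rather than a monopole, which is the source of the polynomial correction that appears in the final bound.

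The last step is to bound this combination below by $c\,G_m(0,a)/|a|$, using the Ornstein--Zernike asymptotics of $G_m$ on $\mathbb{Z}^2$ (classical; see \cite{Messikh}). These give ratios $G_m(a,v)/G_m(a,0) = \rho(v,\hat a) + O(1/|a|)$ uniformly in the direction $\hat a$ of $a$, so the leading-order behaviour is controlled by $\Sigma(\hat a) := \sum_v \eta(v) \rho(v,\hat a)$. If $\Sigma(\hat a)$ is nonzero, then $|F(a)| \geq c\,G_m(0,a)$, a bound stronger than required; if it vanishes, the subleading $O(1/|a|)$ correction produces exactly the bound we need. The main obstacle is concentrated here: verifying uniformly in the upper-right quadrant that the combination is not accidentally cancelled to higher order in $|a|$ for some exceptional direction. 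In essence, one must show that the massive discrete dipole inherited from the $\pm 1$ multi-valuedness at $0$ decays at worst one polynomial factor of $|a|$ faster than the massive Green function itself --- the natural discrete analogue of the fact that in $\mathbb{R}^2$ a massive dipole lies one power of $r$ above the massive monopole.
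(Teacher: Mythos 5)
Your proposal diverges from the paper's proof after the shared first step. Both arguments begin by reducing $F(X_\tau)$ to the sign $\pm 1$ determined by the last step into the origin. From there the paper applies a \emph{reflection argument}: letting $T$ be the last visit before $\tau$ to the anti-diagonal $\{y=-x\}$, and reflecting the terminal portion of the path (between $T$ and $\tau$) across this line, one gets a mass-preserving bijection between paths landing from $\{W,S\}$ and paths landing from $\{N,E\}$ that have $T<\infty$. The matched contributions cancel exactly, and the signed sum collapses to the single \emph{nonnegative} quantity $\mathbb{E}^a[m^\tau\,\mathbbm 1_{\{T=\infty\}}]$; no further control of cancellation is needed, and the lower bound follows from a large-deviation estimate for the massive walk constrained to a half-plane. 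Your route instead writes the quantity as a dipole $\frac{m}{4}\sum_{v\sim 0}\eta(v)\,\tilde G_m(a,v)$ and then invokes Ornstein--Zernike asymptotics; a side remark is that since $\sum_v\eta(v)=0$ and $G_m(0,v)$ is independent of $v\sim 0$ by lattice symmetry, the last-exit subtraction drops out and the killed Green function $\tilde G_m$ can be replaced by $G_m$ itself.

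The gap in your proposal is the step you yourself flag. You pose a dichotomy: either $\Sigma(\hat a)\neq 0$ (done, with a stronger bound), or $\Sigma(\hat a)=0$ and ``the subleading $O(1/|a|)$ correction produces exactly the bound we need.'' The second clause is not justified: if the leading coefficient vanished for some direction, nothing in the OZ expansion guarantees that the subleading contribution has a definite sign, nor that it sits at order $|a|^{-1}$ rather than smaller. Dismissing this as ``verifying uniformly\dots that the combination is not accidentally cancelled to higher order'' is not a proof --- it is the entire content of the lemma, and precisely what the paper's reflection trick is built to settle without any asymptotic computation. The good news is that the gap is in fact fillable along your lines and the dichotomy never materializes: writing $\nabla\psi(a)=(g_1,g_2)$ for the gradient of the rate function (which points into the closed upper-right quadrant with $(g_1,g_2)\neq(0,0)$), the first-order OZ ratios give $\Sigma(\hat a)\sim e^{-g_1}+e^{-g_2}-e^{g_1}-e^{g_2}=-2(\sinh g_1+\sinh g_2)$, strictly negative and bounded away from $0$ uniformly over directions in the sector; this would actually yield the stronger estimate $|F(a)|\geq c\,G_m(0,a)$. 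But then you would still owe a uniform, rigorous version of the OZ expansion as an input, whereas the reflection argument runs with nothing more than a crude large-deviation lower bound for a half-plane-constrained walk.
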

\begin{pf}
Recall that $F(X_{\tau})$ is equal to 1 or $-1$ depending on the last
step the walk takes before reaching 0. Let us rewrite
$\mathbb{E}^a[F(X_{\tau})m_{\tau}]$ as
\[
\mathbb{E}^a\bigl[m^\tau1_{\{X_{\tau-1}=W \ \mathrm{or}\ S\}}\bigr] -
\mathbb{E}^a\bigl[m^\tau1_{\{X_{\tau-1}=N\ \mathrm{or}\ E\}}\bigr].
\]
Now, let $\Delta_\alpha$ be the line $y=-x$, and let $T$ be the time
of the last visit of $\Delta_\alpha$ by the walk before time $\tau$
(set $T= \infty$ if it does not exist). On the event that
$X_{\tau-1}=W$ or $S$, this time is finite, and reflecting the part
of the path between $T$ and $\tau$ across $\Delta_\alpha$ produces a
path from $a$ to $0$ with $X_{\tau-1}=E$ or~$N$. This transformation
is one-to-one, so summing over all paths, we obtain
\begin{eqnarray*}
&&
\mathbb{E}^a\bigl[m^\tau1_{\{X_{\tau-1}=W \ \mathrm{or}\ S\}}\bigr] -
\mathbb{E}^a\bigl[m^\tau1_{\{X_{\tau-1}=N\ \mathrm{or}\ E\}}\bigr]\\
&&\qquad = -
\mathbb{E}^a\bigl[m^\tau1_{\{X_{\tau-1}=N\ \mathrm{or}\ E\}}
1_{\{T=\infty\}}\bigr],
\end{eqnarray*}
which in turn is equal to
$-\mathbb{E}^a[m^\tau1_{\{T=\infty\}}]$. General arguments of large
deviation theory imply that $\mathbb{E}^a[m^\tau1_{\{T=\infty\}}]\ge
\frac{c}{|a|}G_{\cos2\alpha}(0,a)$ for some universal constant $c$.
\end{pf}

\textit{The upper bound.}
Assume that $0$ is connected to $a$ in the bulk. We first show how to
reduce the problem to estimations of correlations for points on the
boundary of a domain.

For every $u=u_1+{\mathrm i}u_2$ and $v=v_1+{\mathrm i}v_2$ two sites of
$\mathbb{L}$, write $u\prec v$ if $u_1< v_1$ and $u_2< v_2$. This
relation is a partial ordering of $\mathbb{L}$. We consider the
following sets:
\[
\mathbb{L}^+(u) = \{x\in\mathbb{L}\dvtx u\prec x\} \quad\mbox{and}\quad
\mathbb{L}^- = \{x\in\mathbb{L}\dvtx x\prec0\}
\]
and
\[
T(u)=\mathbb{L} \setminus\bigl(\mathbb{L}^+(u)\cup\mathbb{L}^-\bigr).
\]
In
the following, $L^+(u)$ and $L^-$ will denote the interior boundaries
of $T(u)$ near $\mathbb{L}^+(u)$ and $\mathbb{L}^-$, respectively; see
Figure~\ref{figlaplacian}. The measure with wired boundary conditions
on $\mathbb{L}^-$ and free boundary conditions on $\mathbb{L}^+(u)$ is
denoted $\phi_{T(u)}$.

%
%
\begin{figure}

\includegraphics{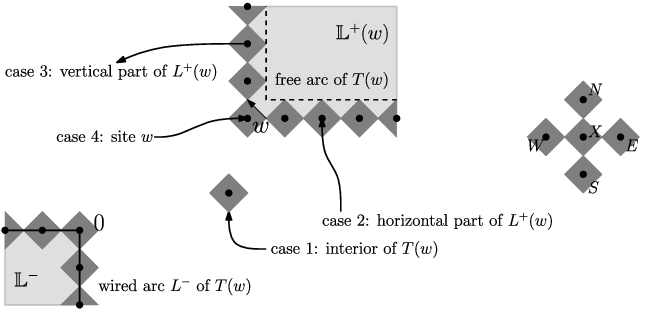}

\caption{The set $T(w)$. The different cases listed in
the definition of the Laplacian are pictured.}
\label{figlaplacian}
\end{figure}

Assume that $a$ is connected to 0 in the bulk. By conditioning on $w$
which maximizes the partial $\succ$-ordering in the cluster of $0$ (it
is the same reasoning as in Section~\ref{secproofthm1}), we obtain the
following:
%
%
\begin{equation}
\label{firstinequality}
\phi_p^0(a\leftrightarrow0)\leq\sum_{w\succ
a}\phi_{T(w)}(w\leftrightarrow\mathbb{L}^-)\leq
C_3|a|\max_{w\succ a,|w|\leq c_3|a|}\phi_{T(w)}(w\leftrightarrow
\mathbb{L}^-)\hspace*{-35pt}
\end{equation}
for $c_3,C_3$ large enough. The existence of $c_3$ is given by the
fact that the two-point function decays exponentially fast: a
priori estimates on the correlation length show that the maximum
above cannot be reached at any $w$ which is much further away from the
origin than $a$, and even that the sum of the corresponding
probabilities is actually of a smaller order than the remaining terms.
Summarizing, it is sufficient to estimate the probability of the
right-hand side of (\ref{firstinequality}).

Observe that $w$ is on the free arc of $T(w)$, so that, harnessing
Lemma~\ref{boundary}, we find
%
%
\begin{equation}
\label{y}
\phi_{T(w)}(w\leftrightarrow L^-)=|F(w)|,
\end{equation}
where $F$ is the observable in the Dobrushin domain $T(w)$ (the
winding is fixed in such a way that it equals 0 at $e_w$). Now,
similarly to Lemma~\ref{massivelaplacian}, $F$~satisfies local
relations in the domain $T(w)$:
%
%
\begin{lemma}
\label{massivelaplacian2}
The observable $F$ satisfies $\Delta_\alpha F=0$ for every site
\textit{not on the wired arc}, where the massive Laplacian
$\Delta_\alpha$ on $T(w)$ is defined by the following relations: for
all $g\dvtx T(w)\mapsto\mathbb{R}$, $(g+\Delta_\alpha g)(X)$ is equal
to
\[
\frac{\cos2\alpha}{4}[g(W)+g(S)+g(E)+g(N)]
\]
inside the domain;
\[
\frac{\cos2\alpha}{2(1+\cos
(\pi/4-\alpha))}[g(W)+g(S)]+\frac{\cos(\pi/4+\alpha)}{1+\cos
(\pi/4-\alpha)}g(E)
\]
on the horizontal part of $L^+(w)$;
\[
\frac{\cos2\alpha}{2(1+\cos
(\pi/4-\alpha))}[g(W)+g(S)]+\frac{\cos(\pi/4+\alpha)}{1+\cos
(\pi/4-\alpha)}g(N)
\]
on the vertical part of  $L^+(w)$;
\[
\frac{\cos2\alpha}{4}[g(W)+g(S)]+\frac{\cos
(\pi/4-\alpha)}{2}[g(E)+g(N)] \qquad \mbox{at } w
\]
with $N$, $E$, $S$ and $W$ being the four neighbors
of $X$.
\end{lemma}
\begin{pf}
When the site is inside the domain, the proof is the same as in
Lem\-ma~\ref{massivelaplacian}. For boundary sites, a similar
computation can be done. For instance, consider case 2 in
Figure~\ref{figlemma}. Equations (3) and (7) in the proof of
Lemma~\ref{massivelaplacian} are preserved. Furthermore,
Lemma~\ref{boundary} implies that
\[
f(X)=f(x_1)=\phi_{T(w)}(X
\leftrightarrow L^-)
\]
and similarly $f(x_4)=f(E)$ (where $f$ is
still as defined in the proof of Lem\-ma~\ref{massivelaplacian}).
Plugging all these equations together, we obtain the second
equality. The other cases are handled similarly.
\end{pf}

Now, we aim to use a representation with massive random walks similar
to the proof of the lower bound. One technical point is the fact that
the mass at $w$ is larger than 1. This could a priori prevent
$(F(X_t)m_t)_{t}$ from being uniformly integrable. Therefore, we need
to deal with the behavior at $w$ separately. Denote by $\tau_1$ the
hitting time (for $t>0$) of $w$, and by $\tau$ the hitting time of
$L^-$. Since the masses are smaller than 1, except at~$w$,
$(F(X_t)m_t)_{t\le\tau\wedge\tau_1}$ is uniformly integrable and we
can apply the stopping theorem to obtain
\[
F(w)=\mathbb{E}^w[F(X_{\tau\wedge\tau_1}) m_{\tau\wedge
\tau_1}]=\mathbb{E}^w[F(X_{\tau_1})
m_{\tau_1}\mathbh{1}_{\tau_1<\tau}]+\mathbb{E}^w[F(X_\tau)
m_\tau\mathbh{1}_{\tau<\tau_1}].
\]
Since $X_{\tau_1}=w$, the previous formula can be rewritten as
%
%
\begin{equation}
\label{lower1}
F(w) = \frac{\mathbb{E}^w [F(X_\tau)
m_\tau\mathbh{1}_{\tau<\tau_1}]}
{1-\mathbb{E}^w(m_{\tau_1}\mathbh{1}_{\tau_1<\tau})}.
\end{equation}

When $w$ goes to infinity in a prescribed direction,
$[1-\mathbb{E}^w(m_{\tau_1}\mathbh{1}_{\tau_1<\tau})]$ converges to
the analytic function $h\dvtx[0,1]\rightarrow\mathbb{R}, p\mapsto
1-\mathbb{E}^w(m_{\tau_1})$ (since the function is
translation-invariant). The function $h$ is not equal to 0 when $p=0$,
implying that it is equal to 0 for a discrete set $\mathcal{P}$ of
points. In particular, for $p\notin\mathcal{P}$, the first term in
the right-hand side stays bounded when $w$ goes to infinity. Denoted
by $C_4=C_4(p)$ such a bound. Recalling that $|F|\leq1$ and that the
mass is smaller than~1 except at $w$, (\ref{lower1}) becomes
%
%
\begin{eqnarray}
|F(w)|&\leq& C_4|\mathbb{E}^w[F(X_\tau)
m_\tau\mathbh{1}_{\tau<\tau_1}]|\leq
\mathbb{E}^w[m_\tau\mathbh{1}_{\tau<\tau_1}]\\
\label{secondinequality}
&\leq& C_4\sum_{w\prec x}\mathbb{E}^x\bigl[(\cos
2\alpha)^\tau\mathbh{1}_{\tau<\tau_1}1_{\{
(X_t)\ \mathrm{avoids}\ L^+(w) \}}\bigr]\nonumber\\[-8pt]\\[-8pt]
&\leq& C_4\sum_{w\prec x}G_{\cos
2\alpha}(0,x),\nonumber
\end{eqnarray}
where the last inequality is due to the fact that we release the
condition on avoiding $L^+(w)$.\vadjust{\goodbreak}

Finally, it only remains to bound the right-hand side. From
(\ref{secondinequality}), we deduce
%
%
\begin{equation}
\label{thirdequation}
|F(w)|\leq C_5|w|G_{\cos
2\alpha}(0,w),
\end{equation}
where the existence of $C_5$ is due to the exponential decay of
$G_{\cos2\alpha}(\cdot,\cdot)$ and the fact that $G_{\cos
2\alpha}(0,x)\leq G_{\cos2\alpha}(0,w)$ whenever $w\prec x$. We
deduce from~(\ref{firstinequality}), (\ref{y}) and
(\ref{thirdequation}) that
%
%
\begin{equation}\qquad
\phi_p(0\leftrightarrow a)\leq C_3C_5|a|^2\max_{w\succ
a,|w|_{\infty}\leq c_5|a|_{\infty}} G_m(0,w)\leq C_6|a|^2G_m(0,a).
\end{equation}
Taking the logarithm, we obtain the claim for all $p<p_{\mathrm{sd}}$ not in
the discrete set $\mathcal P$. The result follows for every $p$ using
the fact that the correlation length is increasing in $p$.
\end{pf*}

\section*{Acknowledgments}

This work has been accomplished during the stay of the first author in
Geneva. The authors would like to thank the anonymous referee for useful
comments on a previous version of this paper. The second author
expresses his gratitude to S. Smirnov for his constant support during
his Ph.D.


%

\printaddresses

\end{document}